\documentclass[12pt]{article}

\usepackage{graphicx}
\usepackage[latin1]{inputenc}
\usepackage{amssymb,amsfonts,amsmath,amsthm}

\def\r{\mathbb R}

\def\e{\mathbb E}
\def\h{\mathbb H}
\def\n{\mathbb N}

\setlength{\textwidth}{16cm}
\setlength{\oddsidemargin}{0cm}
\setlength{\textheight}{19cm}
\setlength{\parskip}{2mm}
\setlength{\parindent}{0em}
\setlength{\headsep}{1.5cm}

\newtheorem{theorem}{Theorem}[section]
\newtheorem{corollary}[theorem]{Corollary}
\newtheorem{definition}[theorem]{Definition}

\newtheorem{remark}[theorem]{Remark}


\title{Surfaces in Sol$_3$ space foliated by circles}
\author{Rafael L\'opez\\
Departamento de Geometr\'{\i}a y Topolog\'{\i}a\\
Universidad de Granada, 18071 Granada, Spain\\
\vspace*{.5cm}\\
Ana Irina Nistor\\
Faculty of Mathematics, "Al.I.Cuza'' University of Iasi\\
Bd. Carol I n. 11, 700506 Iasi, Romania}

\date{}

\begin{document}

\maketitle

\begin{abstract} In this paper we study surfaces foliated by a uniparametric
family of circles in the homogeneous space Sol$_3$. We prove that there do not exist such surfaces
with zero mean curvature or with zero Gaussian curvature. We extend this study considering  surfaces foliated by  geodesics,
equidistant lines or horocycles in totally geodesic planes and we classify all such surfaces under the assumption of minimality or flatness.
\end{abstract}

\section{Statement of the results}

In Euclidean space  $\e^3$, the plane and the catenoid are the only minimal rotational surfaces.
These surfaces are foliated by a uniparametric family of coaxial circles. We consider now a more general context
and we ask for those minimal surfaces foliated by a uniparametric family of circles in parallel planes, called  cyclic surfaces \cite{en,ni}.
A straightforward computation shows that, besides the rotational surfaces, the surface must be one of the classical family of minimal surfaces
discovered by Riemann \cite{ni,ri}. Similarly, the only cyclic surfaces in $\e^3$ with zero Gaussian curvature are rotational surfaces,
or generalized cones \cite{lo}.

The motivation of the present paper is to consider cyclic surfaces in Sol$_3$ space and to classify them under the assumptions of minimality and flatness.
This space  is a simply connected homogeneous $3$-manifold  and it belongs to one of the eight models of the geometry of Thurston \cite{th}.
As a Riemannian manifold, the space Sol$_3$  can be represented by $\r^3$ equipped with the metric
$$
\langle~,~\rangle=e^{2z}dx^2+e^{-2z}dy^2+dz^2,
$$
where $(x,y,z)$ are the canonical coordinates of $\r^3$. The space Sol$_3$ is a Lie group  with the group operation
$$
(x,y,z)\ast (x',y',z')=(x+e^{-z}x',y+e^{z}y',z+z'),
$$
and the metric $\langle~,~\rangle$ is  left-invariant. Recently, there is an activity on the study of submanifolds in Sol$_3$,
see for example \cite{dm,il,lo2,lm1,lm2,st}.

Recall that in Sol$_3$ there is not a group of rotations as in the Euclidean space $\e^3$,
that is, a uniparametric continuous group of direct isometries that leave pointwise a geodesic.
However, a rotational surface in $\e^3$ is also characterized as a surface foliated by the uniparametric circles $C_s$
all contained in planes $P_s$ whose centers form a straight line orthogonal to each $P_s$. In Sol$_3$ we have  all these concepts, such as
geodesic, totally geodesic surface, and distance. Under this viewpoint, we carry the next definition:

\begin{definition}
A circle in Sol$_3$ is a curve contained in a totally geodesic surface $P$ and equidistant from a fixed point of $P$, called the center of the circle.
\end{definition}

We point out that the distance in the circle is computed with the induced metric on $P$ by Sol$_3$,
which agrees with the one induced on the curve by the metric of the ambient space because $P$ is totally geodesic.

We recall that in  Sol$_3$ the totally geodesic surfaces are the vertical planes $P_s$ of equation $x=s$ and the planes $Q_s$ of equation $y=s$, where $s\in\r$.
Here we use the notion of 'vertical' and 'horizontal' in its usual affine concept of the set $\r^3$, where the space Sol$_3$ is modeled.

By analogy with what happens in Euclidean space, we give the next definition.

\begin{definition}
A cyclic surface in Sol$_3$ is a surface foliated by a uniparametric family of circles contained in the planes $P_s$ (resp. contained in the planes $Q_s$).
If the centers of these circles form a geodesic line orthogonal to each plane $P_s$ (resp. each plane $Q_s$), then we say that the surface is rotational.
\end{definition}

Motivated by the results of Euclidean space $\e^3$, we ask for those  cyclic surfaces in Sol$_3$ that are minimal (zero mean curvature)
or flat (zero Gaussian curvature). For example, we ask about the existence of rotational surfaces or surfaces with the same role as Riemann minimal examples.

Because the study of cyclic surfaces foliated by circles in the planes $P_s$ is analogous to the ones foliated by circles in the planes $Q_s$,
we will assume in this paper that the cyclic surfaces are always foliated by circles in the totally geodesic planes of type $P_s$.

In this sense, we obtain:

\begin{theorem}\label{t1}
 There are no  minimal cyclic surfaces in Sol$_3$.
\end{theorem}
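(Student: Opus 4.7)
The plan is to parametrize an arbitrary cyclic surface explicitly, express the mean curvature $H$ as a rational trigonometric function of the circle parameter, and show that the equation $H=0$ forces an overdetermined system of ODEs in the foliation parameter that admits no solution.

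First I would set coordinates on the totally geodesic plane $P_s$. Its induced metric $e^{-2z}dy^2+dz^2$ becomes the standard half-plane hyperbolic metric $(dy^2+dv^2)/v^2$ under the substitution $v=e^z$, so a circle of geodesic radius $r(s)$ centered at the point with coordinates $(y_0(s),z_0(s))$ is nothing but a Euclidean circle in the $(y,v)$-plane. This produces the global parametrization
\begin{equation*}
X(s,\theta)=\bigl(s,\; y_0(s)+e^{z_0(s)}\sinh r(s)\cos\theta,\; \log[e^{z_0(s)}(\cosh r(s)+\sinh r(s)\sin\theta)]\bigr),
\end{equation*}
where $y_0,z_0,r$ are the three unknown functions of $s$ with $r(s)>0$.

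Next I would compute $X_s$, $X_\theta$, the Sol$_3$ first fundamental form, the unit normal $N$ and the second fundamental form, working in the left-invariant orthonormal frame $E_1=e^{-z}\partial_x$, $E_2=e^z\partial_y$, $E_3=\partial_z$ in order to keep the connection coefficients manageable. The key bookkeeping observation is that the metric factors $e^{\pm z}$ along the surface become $[e^{z_0}(\cosh r+\sinh r\sin\theta)]^{\pm 1}$, so after clearing a suitable denominator the minimality condition $H=0$ takes the form $P(s,\cos\theta,\sin\theta)=0$, where $P$ is a polynomial whose coefficients depend on $y_0,z_0,r$ and their first and second derivatives. Since the monomials $\cos^i\theta\,\sin^j\theta$ are linearly independent as functions of $\theta$, each such coefficient must vanish identically on $s$, producing a system of ODEs.

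The heart of the proof is showing this system is inconsistent. I would first exploit the terms of highest degree in $\sin\theta$ and $\cos\theta$: these typically reduce to algebraic conditions forcing severe restrictions on $r'$ or $z_0'$ (for instance $r'\equiv 0$ or a rigid relation between $z_0'$ and $r$). Feeding these back into the middle-degree coefficients should successively pin down the remaining unknowns, until one of the lower-order equations yields the contradiction. The main obstacle is the sheer size of the computation: unlike the Euclidean case treated in \cite{ni}, the factors $e^{\pm 2z}$ in the Sol$_3$ metric introduce high powers of $(\cosh r+\sinh r\sin\theta)$, raising the degree of $P$ and the number of coefficient equations. A useful simplification is to split $P$ according to parity in $\cos\theta$, since $X$ depends on $\cos\theta$ only through the $y$-coordinate; this roughly halves the number of independent equations before they are confronted in order of decreasing degree.
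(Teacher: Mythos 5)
Your proposal follows essentially the same route as the paper: parametrize the circles in $P_s$ via the upper half-plane model (your hyperbolic center/radius data $(y_0,z_0,r)$ is just a reparametrization of the paper's Euclidean circle data $(a,b,r)$ with $b+r\sin t>0$), expand the cleared-denominator minimality condition as a trigonometric polynomial in the circle parameter, and invoke linear independence of the trigonometric monomials to kill each coefficient. The paper's computation shows the anticipated cascade is unnecessary: the equation has the form $\sum_{n=0}^{5}(A_n(s)\cos(nt)+B_n(s)\sin(nt))=0$ and the single leading coefficient $B_5=r(s)^6/16$ already forces $r\equiv 0$, a contradiction, so your outline is correct and terminates at the very first step.
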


\begin{theorem}\label{t2}
 There are no  flat cyclic surfaces in Sol$_3$.
 \end{theorem}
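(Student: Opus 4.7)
The first step is to identify each totally geodesic plane $P_s=\{x=s\}$ with the upper half-plane model of $\h^2$: the induced metric $e^{-2z}dy^2+dz^2$ becomes the standard half-plane metric $w^{-2}(dy^2+dw^2)$ after the substitution $w=e^z$. In this model, a hyperbolic circle of hyperbolic center $(\beta,\omega)$ and hyperbolic radius $r>0$ is precisely the Euclidean circle centered at $(\beta,\omega\cosh r)$ with Euclidean radius $\omega\sinh r$. Hence a cyclic surface in Sol$_3$ admits a local parametrization of the form
\begin{equation*}
X(s,t)=\bigl(s,\; \beta(s)+\omega(s)\sinh r(s)\,\cos t,\; \log\omega(s)+\log\rho(s,t)\bigr),
\end{equation*}
with $\rho(s,t):=\cosh r(s)+\sinh r(s)\sin t$ and $\beta(s)$, $\omega(s)>0$, $r(s)>0$ three unknown smooth functions, so that $e^z=\omega\rho$ along $X$.

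\textbf{Reduction to a trigonometric identity.} Next, from the ambient Sol$_3$ metric I would compute the coefficients $E,F,G$ of the induced first fundamental form and derive the intrinsic Gaussian curvature $K$ via the Brioschi formula (or, equivalently, by combining the extrinsic quantity $(eg-f^2)/(EG-F^2)$ with the ambient sectional curvature through the Gauss equation). Because $t$ enters only through $\sin t,\cos t$ via the elementary quantity $\rho$, after clearing a common positive denominator the flatness condition $K\equiv 0$ becomes a trigonometric polynomial identity in $t$,
\begin{equation*}
\sum_{k=0}^{N}\bigl(A_k(s)\cos(kt)+B_k(s)\sin(kt)\bigr)\equiv 0,
\end{equation*}
whose coefficients depend polynomially on $\beta,\omega,r$ and their $s$-derivatives up to order two. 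Since $t$ is arbitrary, each $A_k(s)$ and $B_k(s)$ must vanish identically in $s$, producing an overdetermined system of ODEs in the three unknowns.

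\textbf{The main obstacle.} Ruling out every solution of this overdetermined system is the crux of the proof. My strategy is to read off the highest-frequency coefficients first, expecting them to involve only the first-order derivatives $\beta',\omega',r'$ and algebraic expressions in $\omega,r$; their vanishing should force very strong constraints on the variation of the foliating circles. In the extreme ``rigid'' case in which $\beta,\omega,r$ are all constants, the surface reduces to a product-like sweep of a fixed hyperbolic circle along the $s$-direction, and a direct computation with $E=\omega^2\rho^2,\,F=0,\,G=\sinh^2 r/\rho^2$ shows that its intrinsic Gaussian curvature is not identically zero; intuitively, the factor $e^{2z}$ in the Sol$_3$ metric prevents $x$-translations from acting as isometries at different heights, so this cylindrical surface is genuinely curved. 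The delicate part is to eliminate the intermediate, non-rigid cases, where second derivatives of $\beta,\omega,r$ enter and a careful bookkeeping of the successive Fourier coefficients is required to reach the contradiction, thereby establishing Theorem~\ref{t2}.
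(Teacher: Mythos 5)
Your setup is sound and your overall strategy (parametrize the foliating circles, expand the flatness condition as a trigonometric polynomial in $t$, and force all Fourier coefficients to vanish) is exactly the paper's strategy; your parametrization via hyperbolic center $(\beta,\omega)$ and hyperbolic radius $r$ is just a reparametrization of the paper's $(a,b,r)$ with $b=\omega\cosh r$ and Euclidean radius $\omega\sinh r$. But the proposal stops precisely where the proof has to happen. You verify only the ``rigid'' case of constant $\beta,\omega,r$ and explicitly defer ``the delicate part'' --- eliminating the non-rigid cases --- to an unspecified bookkeeping of successive Fourier coefficients. That elimination is the entire content of the theorem, so as written this is a plan, not a proof. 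Moreover, your guess about the structure of the top coefficient is off: in the paper the leading coefficient of the degree-$5$ trigonometric polynomial is $B_5=\tfrac{1}{16}\,b(s)\,r(s)^5$ (in the paper's notation, so $b=\omega\cosh r$ and $r$ the Euclidean radius), which contains \emph{no derivatives at all}. Its vanishing forces $b\equiv 0$, which is incompatible with the circle condition $b>r>0$, and the proof ends there --- no cascade through lower-order coefficients and no separate treatment of rigid versus non-rigid cases is needed. Without exhibiting that leading coefficient (or some equivalent obstruction), the argument has a genuine gap.

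A secondary but substantive point: you propose to compute the \emph{intrinsic} Gaussian curvature via the Brioschi formula, whereas the quantity the paper calls $K$ and sets to zero is the extrinsic one, $K=(ln-m^2)/(EG-F^2)$ with $l,m,n$ the second fundamental form coefficients. In Sol$_3$ these two differ by the ambient sectional curvature of the tangent plane (Gauss equation), which is non-constant, so the trigonometric identity you would analyze is not the same as the paper's, and you would be proving a different statement from the one the paper intends. You should fix one notion of flatness consistent with the paper's equation \eqref{fk0} before expanding in Fourier modes.
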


Both theorems will be proved in Section \ref{se-3}. On the other hand, recall that the planes $P_s$ are isometric to the hyperbolic plane $\h^2$ and the circles in
$P_s$ are curves with constant curvature $\kappa$, $\kappa>1$.  However, in the hyperbolic plane there are other curves with constant curvature $\kappa$, but
$0\leq \kappa\leq 1$. Indeed, these curves are geodesics, equidistant lines and horocycles. Therefore, we ask for minimal or flat cyclic surfaces
foliated by these curves. In this context, we obtain in Section \ref{se-4} all minimal or flat cyclic surfaces foliated by geodesics,
equidistant lines, or horocycles. See Theorems \ref{t3} and \ref{t4}.

\section{Preliminaries}\label{se-2}

The isometry group of Sol$_3$ has dimension $3$ and the component of the identity is generated by the following families of isometries:
\begin{eqnarray*}
& &T_{1,c}(x,y,z):=(x+c,y,z),\\
& &T_{2,c}(x,y,z):=(x,y+c,z),\\
& &T_{3,c}(x,y,z):=(e^{-c}x,e^{c}y,z+c),
\end{eqnarray*}
where $c\in\r$ is a real parameter. These isometries are left multiplications by elements of Sol$_3$ and so,
they are left-translations with respect to the structure of the Lie group. The Sol$_3$ space has  the next three foliations:
\begin{eqnarray*}
& &\mathcal{ F}_1: \{P_s=\{(s,y,z);y,z\in\r\};s\in\r\}\\
& &\mathcal{F}_2: \{Q_s=\{(x,s,z);x,z\in\r\};s\in\r\}\\
& &\mathcal{F}_3: \{R_s=\{(x,y,s);x,y\in\r\};s\in\r\}.
\end{eqnarray*}
The foliations $\mathcal{F}_1$ and $\mathcal{F}_2$  are determined by the isometry groups $\{T_{1,c};c\in\r\}$ and $\{T_{2,c};c\in\r\}$ respectively, and they
describe the (only) totally geodesic surfaces of Sol$_3$, each leaf of the foliation being isometric to a hyperbolic plane.
The foliations $\mathcal{F}_3$ are minimal surfaces, all of them being isometric to the Euclidean plane.

With respect to the metric $\langle~,~\rangle$, an orthonormal basis of left-invariant vector fields is given by
$$E_1=e^{-z}\frac{\partial}{\partial x},\hspace*{.5cm}  E_2=e^{z}\frac{\partial}{\partial y},\hspace*{.5cm}  E_3=\frac{\partial}{\partial z}.$$

The Riemannian connection ${\nabla}$ of Sol$_3$ with respect to $\{E_1,E_2,E_3\}$  is
$$\begin{array}{lll}
{\nabla}_{E_1} E_1=-E_3 & {\nabla}_{E_1}E_2=0&{\nabla}_{E_1}E_3=E_1\\
{\nabla}_{E_2} E_1=0 & {\nabla}_{E_2}E_2=E_3&{\nabla}_{E_2}E_3=-E_2\\
{\nabla}_{E_3} E_1=0 & {\nabla}_{E_3}E_2=0&{\nabla}_{E_3}E_3=0.\\
\end{array}
$$

For a surface in Sol$_3$ parameterized in local coordinates by $X=X(s,t)$ and endowed with the unit normal $N$, the formulas
of the mean curvature and Gaussian curvature are, respectively,
$$H =\frac{1}{2}\frac{En- 2Fm +Gl}{EG-F^2},\ \ K=\frac{ln-m^2}{EG-F^2},$$
with
\begin{eqnarray*}
&&E=\langle X_s,X_s\rangle,\ F=\langle X_s,X_t\rangle,\ G=\langle X_t,X_t\rangle,\\
&&l=\langle \nabla_{X_s} X_s, N\rangle,\ m=\langle \nabla_{X_s} X_t, N\rangle = \langle \nabla_{X_t} X_s, N\rangle,
\ n=\langle \nabla_{X_t} X_t, N\rangle .
\end{eqnarray*}

Each plane $P_s$ has  constant curvature $-1$ and it is isometric to  $\h^2$.
For an explicit isometry between both spaces, we consider $\h^2$ modeled by the upper half-plane model,
that is, $\h^2$ is the upper half-plane $\r^2_{+}=\{(x,y)\in\r^2;y>0\}$ endowed with the metric $ds^2=((dx)^2+(dy)^2)/y^2$.
The isometry between $P_s$ and $\h^2$ is  given by
$$
\Phi_s:P_s\rightarrow\h^2, \ \ \Phi_s(y,z)=(y,e^z).
$$
In the upper half-plane model, a circle in $\h^2$ is viewed as an Euclidean circle strictly contained  in $\r^2_+$, which it is parameterized by
$\beta(t)=(a,b)+r(\cos(t),\sin(t))$, $a,b\in\r$, $r>0$ and $b-r>0$.
The center of the circle $\beta$ is the point $c_\beta=(a,\sqrt{b^2-r^2})$ and the hyperbolic radius is $r_\beta=\log((b+r)/\sqrt{b^2-r^2})$.
Therefore the circle $\beta$ is viewed in Sol$_3$  as $C_s:=\Phi_s^{-1}(\beta)$ and its parametrization is then
\begin{equation}\label{excircle}
\alpha_s(t)=(s,a+r\cos(t),\log{(b+r\sin(t))}),\ t\in\r,
\end{equation}
see Figure \ref{fig1}. On the other hand, we consider the foliation ${\cal F}_1=\{P_s;s\in\r\}$ of the ambient space.
The geodesics of Sol$_3$ intersecting orthogonally each plane   of ${\cal F}_1$ are the horizontal lines of equation $y=a, z=b$, where $a, b\in\r$.

\begin{figure}[htbp]\begin{center}
\includegraphics[width=.5\textwidth]{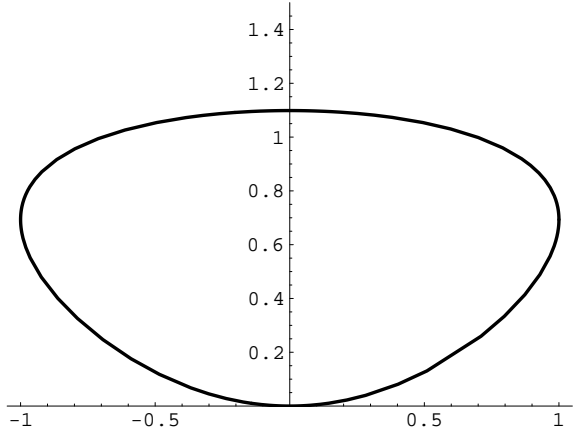}
\end{center} \caption{The circle $\Phi_s^{-1}(\beta)$ in a plane $P_s$, where $\beta(t)=(0,2)+(\cos(t),\sin(t))$}
\label{fig1}
\end{figure}

A parametrization of a rotational surface in Sol$_3$ is
$$X(s,t)=(s,a+r(s)\cos(t),\log{(b+r(s)\sin(t))}),$$
where $a,b\in\r$, $r(s)>0$ and $b-r>0$.  The line of centers is $s\longmapsto (s,a,\sqrt{b^2-r(s)^2})$.

We obtain a cyclic surface by considering the constants $a$ and $b$ as functions of the variable $s$. The
parametrization of the surface is then
\begin{equation}\label{cyclicx}
X(s,t)=(s,a(s)+r(s)\cos(t),\log{(b(s)+r(s)\sin(t))}), s\in I, t\in\r,
\end{equation}
and $I\subset \r$ is an open interval.

\begin{remark} Because our results are local, we only need to compute  $H$ and $K$ when the  variable $t$ is defined in some open interval of $\r$,
that is, when we have an arc of a circle. Thus we can consider in the parametrizations
\eqref{excircle} and \eqref{cyclicx} that $t\in J$, where $J\subset\r$ is an open interval. Then in the statements of the results we change
'foliated by circles' by 'foliated by arcs of circles'.
\end{remark}

Since in this paper we study surfaces with zero mean curvature or zero Gaussian curvature, the equations $H=0$ and $K=0$ are, respectively,  equivalent to
\begin{equation}\label{fh0}
E \langle \nabla_{X_t} X_t, \tilde{N}\rangle -  2F \langle \nabla_{X_s} X_t, \tilde{N}\rangle + G \langle \nabla_{X_s} X_s, \tilde{N}\rangle=0,
\end{equation}
and
\begin{equation}\label{fk0}
 \langle \nabla_{X_t} X_t, \tilde{N}\rangle  \langle \nabla_{X_s} X_s, \tilde{N}\rangle  -   \langle \nabla_{X_s} X_t, \tilde{N}\rangle^2=0
\end{equation}
where $\tilde{N}$ is a normal vector field to the surface but no necessarily unitary.
It will be with both equations as we will obtain our results.

The proof of our results consists in substituting parametrization \eqref{cyclicx} into \eqref{fh0} and \eqref{fk0}, obtaining a polynomial of type
\begin{equation}\label{fstrategy}
\sum_{n=0}^k (A_n(s) \cos{(nt)}+B_n(s)\sin{(nt)})=0, \ s\in I, t\in J,
\end{equation}
for some natural number $k\in\n$. For each fixed $s\in I$,
we view this expression as a polynomial in $t$; recall that $t$ belongs to an interval of $\r$.
Because the functions $\{ \cos{(nt)}, \sin{(nt)};n\in\n\}$ are linearly independent,
all coefficients $A_n(s)$ and $B_n(s)$ vanish. The explicit computations of \eqref{fh0} and \eqref{fk0} are straightforward, but complicated.
We use a symbolic program as Mathematica to check these computations.

In \eqref{fstrategy} the simplest expressions correspond to the leader coefficients $A_k$ and $B_k$. Using the equations $A_{k}=0$ and $B_k=0$,
we get information that will be used in determining the next coefficients $A_{k-1}$ and $B_{k-1}$ and we repeat the process until we find our results.

\section{Proof of Theorems \ref{t1} and \ref{t2}}\label{se-3}

\begin{proof}(of Theorem \ref{t1})
Using parametrization \eqref{cyclicx} of a cyclic surface in Sol$_3$ we compute
\begin{eqnarray*}
&&X_s = (b(s) + r(s)\sin(t))E_1+\frac{a'(s) + r'(s)\cos(t)}{b(s) + r(s)\sin(t)}E_2+ \frac{b'(s) + r'(s)\sin(t)}{b(s) + r(s)\sin(t)}E_3,\\
&&X_t =\frac{r(s)}{b(s) + r(s)\sin(t)}(-\sin(t) E_2+ \cos(t) E_3),
\end{eqnarray*}
and we choose the normal vector
$$
\tilde{N} =\frac{r'(s) + a'(s)\cos(t) + b'(s)\sin(t)}{(b(s) + r(s)\sin(t))^2}E_1 -\cos(t)E_2 -\sin(t)E_3.
$$
After some manipulations, the minimality condition yields an expression of type \eqref{fstrategy} with $k=5$ where
$$A_5=0,\ \ B_5=\frac{r(s)^6}{16}.$$
Because $B_5=0$, we conclude $r(s)=0$, which is a contradiction.

\end{proof}

\begin{proof}(of Theorem \ref{t2})
Following the same reasoning as in the proof of the previous theorem, the flatness condition is equivalent to an expression of type \eqref{fstrategy}
with $k=5$ again. Here
$$
A_5=0,\ \ B_5=\frac{1}{16}b(s)r(s)^5.
$$
From $B_5=0$, we have $b(s)=0$. However this is a contradiction with the fact that a circle $\Phi_s^{-1}(\beta(t))$
in $P_s$ must satisfy the condition $b-r>0$, see \eqref{excircle}.

\end{proof}

\section{Surfaces foliated by geodesics, equidistant lines and horocycles} \label{se-4}

The curvature  $\kappa$ of a circle   in Sol$_3$ satisfies $\kappa>1$ because it is isometric to a circle of $\h^2$.
In fact, if $C_s$ is parameterized as \eqref{excircle}, its curvature  is $\kappa_\beta=b/r$, computed with the orientation pointing inside.
In the hyperbolic plane  $\h^2$, the definition of a circle is equivalent to a curve with   constant curvature $\kappa$, $\kappa>1$. However,
in $\h^2$ there are other curves with constant curvature $\kappa$ with $0\leq \kappa\leq 1$. Indeed, if $\kappa=0$, the curve is a geodesic;
if $0<\kappa<1$, the curve is an equidistant line; and if $\kappa=1$, the curve is a horocycle.
In these cases, the curves are not closed such as it occurs for  a circle. In the upper half-plane model of $\h^2$,
the three types of curves are viewed as  the intersection of circles and straight-lines with $\r_+^2$. Exactly:
\begin{enumerate}
\item If $\beta$ is a geodesic, then $\beta$ is a half-circle intersecting orthogonally the line $y=0$ or it is a vertical straight-line
$\beta(t)=(a,t)$, $a\in\r$, $t>0$.
\item If $\beta$ is an equidistant line, then $\beta$ is the intersection of a circle of $\r^2$ with points above
$y=0$ with $\r^2_+$,
or $\beta$ is a straight-line of $\r^2_+$ that intersects not orthogonally the line $y=0$, which parametrizes as
$\beta(t)=(t,at+b)$, $a,b\in\r$, $a\not=0$, $t>-b/a$.
\item If $\beta$ is a horocycle, then $\beta$ is the intersection of a circle of $\r^2$ tangent to the line $y=0$  with $\r^2_+$,
or $\beta$ is a horizontal line of $\r^2_+$, that is, $\beta(t)=(t,a)$, $a>0$, $t\in\r$.
\end{enumerate}

By the isometry $\phi_s^{-1}$, we can carry all these curves to the plane $P_s$ and we call them geodesic, equidistant line and horocycle again.
Following this motivation, we generalize the concept of circle in Sol$_3$ assuming that the curvature of the curve is constant.

\begin{definition}
A cyclic surface in Sol$_3$  by geodesics (resp. equidistant lines, horocycles) is a surface foliated by a uniparametric family of geodesics
(resp. equidistant lines, horocycles)  contained in the planes $P_s$.
\end{definition}

In this section, we study cyclic surfaces in Sol$_3$ foliated by these curves and we ask if there are minimal examples or with vanishing Gaussian curvature.

If the curve $\beta$ is a part of an Euclidean circle of $\r^2_+$, by the isometry $\Phi_s^{-1}$ this curve goes to a curve in $P_s$
with the same  expression \eqref{excircle} with the difference that the condition now is $b+r\sin(t)>0$.
In fact, we have that $\alpha_s=\Phi_s^{-1}(\beta)$ is  geodesic if $b=0$, an equidistant line if $0<b<r$ and a horocycle if $0<b=r$. See Figure \ref{fig2}.
However, the expression of the parametrization of a cyclic  surface is \eqref{cyclicx}, again except the above conditions on $b$ and $r$.
Therefore, the same local computations as in Theorem \ref{t1} and \ref{t2} hold to compute the expressions of $H$ and $K$.

From the proof of Theorem \ref{t1} we obtain that there are no minimal surfaces.

Concerning the flatness, from the proof of Theorem \ref{t2}, we obtained $b(s)=0$.
In such a case, the coefficient $A_4$ is $A_4=-r(s)^2a'(s)^2/2$ and since $A_4$ must vanish, we have that $a(s)$ is a constant function $a(s)=a_0$,
$a_0\in\mathbb R$. Furthermore, $A_2=-8r(s)^2r'(s)^2$, concluding that $r(s)$ is a constant function too.

These results for surfaces parameterized by \eqref{cyclicx} will appear in the statements of Theorems \ref{t3} and \ref{t4}.

\begin{figure}[hbtp]\begin{center}
\includegraphics[width=.3\textwidth]{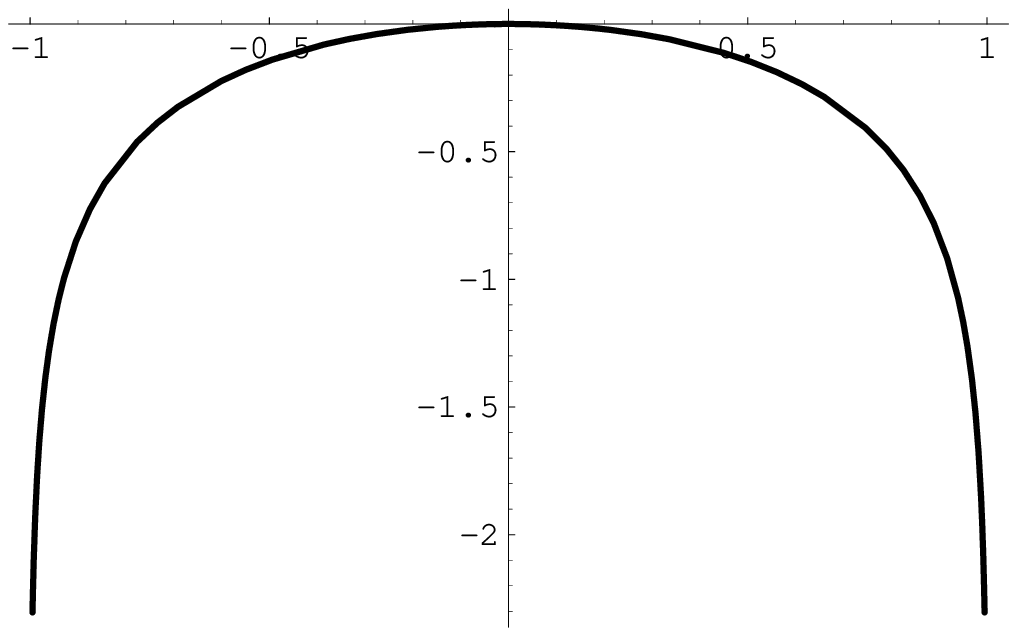}\ \includegraphics[width=.3\textwidth]{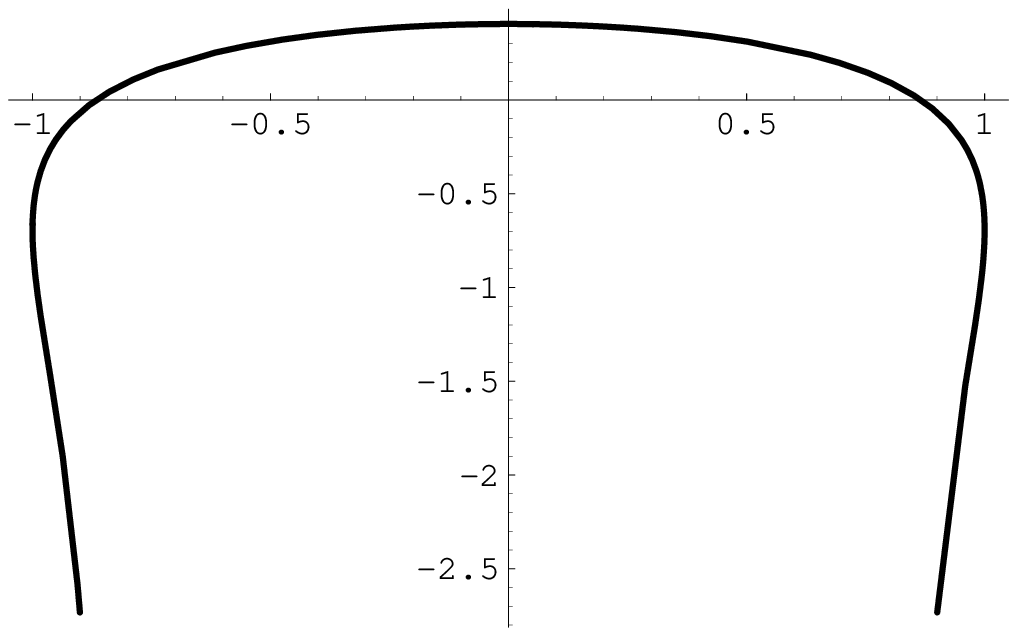}\ \includegraphics[width=.3\textwidth]{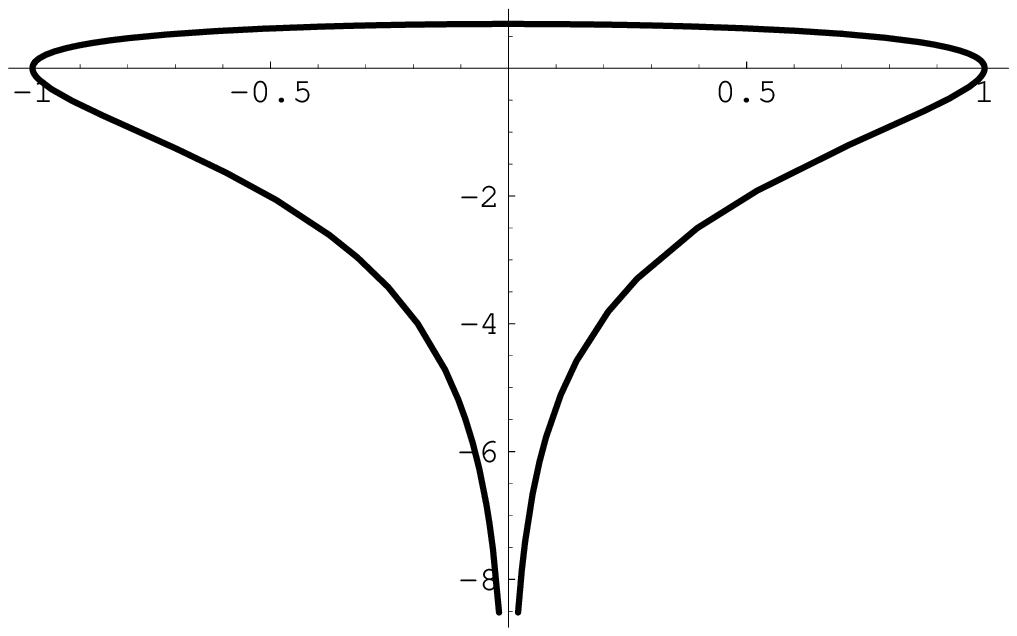}
\end{center} \caption{A geodesic, an equidistant line and a horocycle in a plane $P_s$.
The curves are obtained by $\Phi_s^{-1}(\beta)$ and $\beta(t)=(a,b)+r(\cos(t),\sin(t))$.
For a geodesic, $b=0$; for an equidistant line, $0<b<r$; and for a horocycle, $b=r>0$.}
\label{fig2}
\end{figure}

As a consequence, the  only cases that we have to study are when the curve $\beta$ is a straight-line in $\h^2$.
By the isometry $\Phi_s^{-1}$, these curves transform into:
\begin{enumerate}
\item If $\beta$ is a geodesic, $\Phi_s^{-1}(\beta(t))=(s,a,\log{(t)})$, $t>0$.
\item If $\beta$ is a equidistant line, $\Phi_s^{-1}(\beta(t))=(s,t,\log{(at+b)})$.
\item If $\beta$ is a horocycle, $\Phi_s^{-1}(\beta(t))=(s,t,\log{(a)})$, $a>0$.
\end{enumerate}

Using these parametrizations, we prove:

\begin{theorem}\label{t3}
Consider a cyclic surface in Sol$_3$ with $H=0$.
\begin{enumerate}
\item  If it is foliated by geodesics, the surface is a vertical plane $X(s,t) = (s,\lambda s+\mu,\ t)$, $\lambda,\mu\in\r$.
\item  If it is foliated by equidistant lines, the surface is $\displaystyle X(s,t)=\left(s,t,\log{\left|\frac{b_0}{s+b_1}\right|}\right)$
or  $\displaystyle X(s,t)=\left(s,t,\log{\left|\frac{a_0 t}{s+a_1}\right|}\right)$, where $a_0,a_1, b_0, b_1\in\r$.
\item  If it is foliated by horocycles, the surface is $X(s,t) =(s,t,\lambda+\log{|s+\mu|})$, $\lambda,\mu\in\mathbb{R}$.
\end{enumerate}
\end{theorem}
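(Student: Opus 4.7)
In the discussion preceding Theorem \ref{t3}, it was established that when the foliating geodesics, equidistant lines, or horocycles arise from Euclidean circles in $\h^2$, the proof of Theorem \ref{t1} applies verbatim and yields no minimal examples. It remains therefore to analyze the three ``straight-line'' parametrizations (1)--(3) listed just before the theorem. The strategy is the one established in Theorems \ref{t1} and \ref{t2}: substitute each parametrization into the minimality equation (\ref{fh0}) and exploit the resulting polynomial identity. Since $t$ is now a real variable rather than an angle, the linear independence of $\{\cos(nt),\sin(nt)\}$ used before is replaced by the linear independence of the monomials $\{t^n\}$ after clearing denominators.

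For the geodesic case I would use $X(s,t)=(s,a(s),\log t)$, compute
\[
X_s = tE_1 + \frac{a'(s)}{t}E_2, \qquad X_t = \frac{1}{t}E_3,
\]
and take the non-unit normal $\tilde N = a'(s)E_1 - t^2 E_2$. A direct computation using the connection table for $\{E_i\}$ should give $F=0$, $\langle\nabla_{X_t}X_t,\tilde N\rangle=0$, and $\langle\nabla_{X_s}X_s,\tilde N\rangle$ proportional to $a''(s)\,t$; substitution into (\ref{fh0}) then forces $a''(s)=0$, giving the vertical plane in the statement.

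The horocycle case is handled analogously from $X(s,t)=(s,t,\log a(s))$ and $\tilde N=-a'(s)E_1+a(s)^2 E_3$. The minimality equation should reduce to a single second-order ODE in $a$ equivalent to $(1/a)''=0$; integrating then yields $a(s)$ as the reciprocal of an affine function of $s$, producing the surface in the statement.

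The equidistant case is structurally the most involved, since the parametrization $X(s,t)=(s,t,\log(a(s)t+b(s)))$ carries two unknown functions. Setting $u(s,t)=a(s)t+b(s)$ and clearing denominators by a suitable power of $u$, the minimality equation (\ref{fh0}) becomes a polynomial identity in $t$ whose coefficients combine $a,a',a'',b,b',b''$. I would begin with the highest-degree coefficient and work down, expecting a case dichotomy that forces either $a\equiv 0$ or $b\equiv 0$; in each branch the problem should reduce to the same ODE as in the horocycle case and produce one of the two solution families listed. The principal obstacle will be the algebraic complexity of extracting the coefficients in $t$ and ruling out mixed solutions with both $a$ and $b$ non-constant, for which symbolic computation (as in the proofs of Theorems \ref{t1} and \ref{t2}) will be essential.
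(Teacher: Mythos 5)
Your proposal follows the paper's proof essentially verbatim: reduce to the three straight-line parametrizations, substitute into \eqref{fh0}, and read off the resulting ODEs; the geodesic and horocycle computations you carry out are correct (the paper merely reparametrizes so that the third coordinate is $t$, resp.\ $a(s)$, instead of $\log t$, resp.\ $\log a(s)$, turning your $(1/a)''=0$ into its $a''-a'^2=0$). Your outline of the equidistant case also matches the paper's actual argument: the coefficient of $t^2$ first forces $a(s)=a_0/(s+a_1)$, and only then does the coefficient of $t^1$ produce the dichotomy $a_0=0$ or $b\equiv 0$, which yields exactly the two families in the statement.
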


\begin{proof} We distinguish the three cases.

\begin{enumerate}

\item The surface is foliated by geodesics. The parametrization of the surface is given by
$X(s,t)=(s,a(s),t)$, for some smooth function $a(s)$, $t\in\r$. Now we have
$$
X_s=e^t E_1+a'(s)e^{-t}E_2,\ \ X_t=E_3,\ \ \tilde{N}=a'(s)e^{-t}E_1-e^t E_2.
$$
The minimality condition is equivalent to $a''(s)=0$.
Thus $a(s)=\lambda s+\mu$ for some constants $\lambda,\mu\in\r$. In this case, the surface is a vertical plane.

\item The surface is foliated by equidistant lines. The parametrization of the surface is
$X(s,t)=(s,t,\log{(a(s) t+b(s))})$, with $a(s) t+b(s)>0$. Here
$$
X_s=(a(s)t+b(s))E_1+\frac{a'(s)t+b'(s)}{a(s)t+b(s)}E_3,\ \ X_t=\frac{E_2+a(s)E_3}{a(s)t+b(s)},
$$
$$
\tilde{N}=-\frac{a'(s)t+b'(s)}{(a(s)t+b(s))^2}E_1-a(s)E_2+E_3.
$$
We compute the equation $H=0$ obtaining a $2$-degree polynomial in $t$ in the form $\sum\limits_{n=0}^2 A_n(s) t^n=0$.
Therefore, the coefficients $A_n(s)$ vanish on its domain.
The vanishing coefficient $A_2=(1 + a(s)^2) (-2a'(s)^2+a(s) a''(s)) $ yields $a(s)=a_0/(s+a_1)$, for some constants $a_0, a_1\in\mathbb R$.
Using this information we get
\begin{eqnarray}\displaystyle
A_1=& \frac{a_0}{(a_1+s)^4}\left(2(a_1+s)b(s)+2(2 a_1^2+a_0^2+4 a_1 s+2 s^2)b'(s)+\right.\nonumber \\
&\left.(a_1+s)(a_1^2+a_0^2+2a_1s+s^2)b''(s) \right).\nonumber
\end{eqnarray}
When $A_1$ vanishes identically, $a_0=0$ or $b(s)=0$.
The first case yields $a(s)=0$. Then, the minimality condition $H=0$ writes as $-2b'(s)^2+b(s)b''(s)=0$, with the solution $b(s)=b_0/(s+b_1)$,
$b_0, b_1\in\mathbb R$.\\
In the second case, if $b(s)=0$ then the surface is always minimal because now the expression $H=0$ is trivial.

\item The surface is foliated by horocycles. The parametrization of the surface is given by
$X(s,t)=(s,t,a(s))$, where $a(s)$ is a smooth function. We compute
$$
X_s=e^{a(s)}E_1+a'(s) E_3,\ \ X_t=e^{-a(s)}E_2,\ \ \tilde{N}=a'(s)e^{-a(s)}E_1-E_3.
$$
Here the condition $H=0$ is equivalent to $a''(s)-a'(s)^2=0$, and the solution is $a(s)=\lambda+\log{|s+\mu|}$, with $\lambda,\mu\in\r$.
\end{enumerate}
\end{proof}

\begin{remark}
Part of the surfaces obtained in Theorem \ref{t3} are invariant by the families $\{T_{1,c}\}$ or $\{T_{2,c}\}$. Indeed,
the first surface foliated by equidistant lines is $T_2$-invariant and the second one is $T_1$-invariant.
The surfaces foliated by horocycles are $T_2$-invariant. All these surfaces appeared in \cite{lm2}.
\end{remark}

In a similar manner, we   study flat cyclic surfaces foliated by geodesics, equidistant lines or horocycles.

\begin{theorem}\label{t4}
Consider a cyclic surface in Sol$_3$ with $K=0$.
\begin{enumerate}
\item If it is foliated by geodesics, then the surface is given by
\begin{itemize}
\item[(1.a)] a plane $Q_s$, $s\in\r$,
\item [(1.b)] or $X(s,t)=(s,a+r\cos(t),\log{(r\sin(t))})$, with $a,r\in\r$, $r>0$.
\end{itemize}
\item  If it is foliated by equidistant lines, then the surface is
$$X(s,t) =\left(s,t,\log{\left|\frac{1}{\sqrt{-s^2+\lambda s+\mu}}\right|}\right), \lambda,\mu\in\mathbb{R}.$$
\item   If it is foliated by horocycles, then the parametrization of the surface is
$$X(s,t) =\left(s,t,-\frac{1}{2}\log{\left|-s^2+\lambda s+\mu\right|}\right), \lambda,\mu\in\mathbb{R}.$$
\end{enumerate}
\end{theorem}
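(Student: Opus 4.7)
Mirroring the proofs of Theorems~\ref{t1}, \ref{t2}, and \ref{t3}, I split according to the type of curve (geodesic, equidistant line, horocycle) and, within each, to the shape of $\beta$ (Euclidean circle or Euclidean straight-line). For the circle-type parametrization \eqref{cyclicx} the paragraphs preceding Theorem~\ref{t3} already do the work: \eqref{fk0} forces $b(s)\equiv 0$ exactly as in Theorem~\ref{t2}, and the next coefficients $A_4=-r(s)^2a'(s)^2/2$ and $A_2=-8r(s)^2r'(s)^2$ then pin down $a(s)\equiv a_0$ and $r(s)\equiv r_0$. Since $b=0$ is precisely the geodesic condition but contradicts $0<b<r$ (equidistant lines) and $b=r>0$ (horocycles), only the geodesic subcase survives, producing case~(1.b); the circle form of equidistant lines and horocycles contributes no flat surface.

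For the straight-line geodesic $X(s,t)=(s,a(s),t)$, I reuse the fields $(X_s,X_t,\tilde N)$ computed in the proof of Theorem~\ref{t3}. Since $X_t=E_3$ and $\nabla_{E_3}E_3=0$, the term $\langle\nabla_{X_t}X_t,\tilde N\rangle$ vanishes, so \eqref{fk0} collapses to $\langle\nabla_{X_s}X_t,\tilde N\rangle=0$; a short calculation using $\nabla_{E_1}E_3=E_1$ and $\nabla_{E_2}E_3=-E_2$ shows this quantity equals $2a'(s)$, forcing $a\equiv a_0$ and yielding the plane $Q_{a_0}$, which is case~(1.a). For straight-line horocycles $X(s,t)=(s,t,a(s))$ the three inner products in \eqref{fk0} depend only on $s$ and condense to a single ODE $a''(s)=2a'(s)^2+e^{2a(s)}$; the substitution $v(s)=e^{-2a(s)}$ linearizes this to $v''(s)=-2$, whose general solution $v(s)=-s^2+\lambda s+\mu$ inverts to case~3.

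The remaining and most delicate case is the straight-line equidistant $X(s,t)=(s,t,\log(a(s)t+b(s)))$. Following the strategy of Section~\ref{se-2}, I substitute into \eqref{fk0} and, after clearing the powers of $a(s)t+b(s)$ appearing in the denominators, obtain a polynomial identity of the form $\sum_n A_n(s)\,t^n=0$; by linear independence of the monomials $t^n$, each $A_n$ must vanish. I expect the leading coefficient to decouple and force $a(s)\equiv 0$---the degeneration of equidistant lines into horocycles---after which the remaining equation for $b(s)$ coincides with the ODE already solved in the previous paragraph, giving $b(s)=1/\sqrt{-s^2+\lambda s+\mu}$ and case~2.

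The main obstacle is precisely this final polynomial bookkeeping: because $\tilde N$ itself depends on $t$ through $a(s)t+b(s)$, the $A_n(s)$ mix $a,a',a''$ with $b,b',b''$ in an algebraically entangled way, and one must carefully isolate the leading power to extract $a\equiv 0$ rather than some subtler relation between the two unknowns. The coincidence that cases 2 and 3 of the theorem give identical closed-form surfaces offers a useful consistency check that this degeneration is genuinely forced once flatness is imposed.
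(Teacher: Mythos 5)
Your proposal is correct and follows essentially the same route as the paper: the circle-type parametrization is eliminated via $b\equiv 0$ and the coefficients $A_4=-r^2a'^2/2$, $A_2=-8r^2r'^2$ exactly as in the paragraphs preceding Theorem \ref{t3} (leaving only the geodesic subcase (1.b)), and each straight-line case reduces to the same equations the paper obtains ($a'=0$ for geodesics; a degree-$4$ polynomial whose leading coefficient $-a(s)^4(1+a(s)^2)$ forces $a\equiv 0$ and then $b^4+3b'^2-bb''=0$ for equidistant lines; $a''-2a'^2-e^{2a}=0$ for horocycles). Your two small additions --- the substitution $v=e^{-2a}$ that linearizes the horocycle ODE to $v''=-2$, and the observation that after $a\equiv 0$ the equidistant case literally becomes the horocycle case, explaining why items 2 and 3 of the statement describe the same surfaces --- are correct and make the paper's stated solutions transparent.
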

\begin{proof}
Following the same steps as in the proof of the previous theorem, we discuss the three cases.
\begin{enumerate}

\item
The condition $K=0$ for a cyclic surface foliated by geodesics is equivalent to $ a'(s)^2 = 0$, which yields $a(s)=a_0$. Thus, case (1.a) of the
first item of the theorem is proved.

Case (1.b) corresponds to a flat surface parameterized by \eqref{cyclicx}. 

\item The flatness condition for surfaces foliated by equidistant lines is equivalent to a vanishing 4-degree polynomial, $\sum\limits_{n=0}^4 A_n(s) t^n=0$.
Thus, all the coefficients must be zero. The leader coefficient is $A_4=-a(s)^4(1+a(s)^2)$, which implies  $a(s)=0$. Now $K=0$ simply writes as
$b(s)^4+3b'(s)^2-b(s)b''(s)=0$. The solution is
$\displaystyle b(s)=\frac{\pm 1}{\sqrt{-s^2+\lambda s+\mu}}$, and it yields the parametrization from the second item of the theorem.
\item The condition $K=0$ for a cyclic surface foliated by horocycles is equivalent to $a''(s) - 2 a'(s)^2- e^{2a(s)}=0$. The solution is given by
$a(s) = -\frac{1}{2}\log{\left|-s^2+\lambda s+\mu\right|}$, $\lambda,\mu\in\mathbb{R}$, and the last item of the theorem is proved.
\end{enumerate}
\end{proof}
The surfaces foliated by equidistant lines and by horocycles obtained in Theorem \ref{t4} are now $T_2$-invariant. On the other hand, the surface foliated by geodesics is $T_1$-invariant. Again, these surfaces appeared in \cite{lm2}.

\begin{corollary}  Any minimal or flat cyclic surface in Sol$_3$ foliated by geodesics, equidistant lines or horocycles is invariant by the family of translations $\{T_{1,c}\}_{c\in\r}$ or $\{T_{2,c}\}_{c\in\r}$.
\end{corollary}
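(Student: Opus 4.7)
The plan is a direct inspection of the explicit normal forms already produced in Theorems \ref{t3} and \ref{t4}. For each of the parametrizations $X(s,t)=(x(s,t),y(s,t),z(s,t))$ listed there, one writes the image set down and checks whether either of the coordinate translation families $\{T_{1,c}\}_{c\in\r}$ (translation in $x$) or $\{T_{2,c}\}_{c\in\r}$ (translation in $y$) leaves it invariant. The reason this reduces to inspection is structural: in every parametrization that has been obtained, one of the Cartesian coordinates $x$ or $y$ appears as a free variable ranging over all of $\r$ while the remaining coordinates do not depend on it, and that coordinate dictates the correct translation family.

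\textbf{Step 1: the $T_2$-invariant surfaces.} The first equidistant minimal surface $X(s,t)=(s,t,\log|b_0/(s+b_1)|)$ and the horocycle minimal surface $X(s,t)=(s,t,\lambda+\log|s+\mu|)$ from Theorem \ref{t3}, together with the equidistant flat surface $X(s,t)=(s,t,\log|1/\sqrt{-s^2+\lambda s+\mu}|)$ and the horocycle flat surface $X(s,t)=(s,t,-\tfrac{1}{2}\log|-s^2+\lambda s+\mu|)$ from Theorem \ref{t4}, are all of the form $(s,t,z(s))$ with $y=t$ ranging over all of $\r$ and $z$ independent of $t$. The map $T_{2,c}\colon(x,y,z)\mapsto(x,y+c,z)$ is then manifestly a symmetry of the image.

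\textbf{Step 2: the $T_1$-invariant surfaces.} The totally geodesic plane $Q_s$ and the circular flat family $X(s,t)=(s,a+r\cos t,\log(r\sin t))$ from Theorem \ref{t4}, together with the minimal vertical plane $X(s,t)=(s,\lambda s+\mu,t)$ of Theorem \ref{t3} in the case $\lambda=0$, all have $x=s$ as a free variable while the remaining coordinates are independent of $s$; hence $T_{1,c}$ acts as the reparametrization $s\mapsto s+c$ and preserves the surface. For the minimal vertical plane with $\lambda\ne 0$, the invariant 1-parameter subgroup is the combined translation in the direction $(1,\lambda,0)$, which lies in the abelian subgroup generated by $\{T_{1,c}\}$ and $\{T_{2,c}\}$.

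\textbf{Main obstacle.} There is essentially no obstacle once the classifications of Theorems \ref{t3} and \ref{t4} are available: the corollary reduces to a bookkeeping exercise matching each normal form with the translation family whose orbits foliate it. The only point requiring a moment of thought is that it is the image set, and not merely the parametrization, that must be preserved; in each case the parametrization is adapted so that this check reduces to identifying which of $s,t$ coincides with a Cartesian coordinate ranging freely over $\r$, after which invariance under the corresponding $T_{i,c}$ is immediate.
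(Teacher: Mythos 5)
Your approach --- reading the invariance off the explicit normal forms produced in Theorems \ref{t3} and \ref{t4} --- is exactly the one the paper takes (its justification consists of the two remarks following those theorems), and your checks are correct for the surfaces of the form $(s,t,z(s))$, for the plane $Q_s$, for the flat family $X(s,t)=(s,a+r\cos t,\log(r\sin t))$ and for the vertical planes. Your explicit caveat that the tilted plane $y=\lambda x+\mu$, $\lambda\neq 0$, is preserved only by the combined translations $c\mapsto T_{2,\lambda c}\circ T_{1,c}$, and not by either named family alone, is in fact more careful than the paper, whose remark after Theorem \ref{t3} quietly restricts itself to ``part of the surfaces''.

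There is, however, a genuine gap: your case list omits the second minimal surface foliated by equidistant lines in Theorem \ref{t3}, namely $X(s,t)=(s,t,\log|a_0t/(s+a_1)|)$. It is not of the form $(s,t,z(s))$, so it escapes your Step 1, and its last coordinate depends on $s$, so it escapes your Step 2; your ``free Cartesian coordinate'' criterion simply does not apply to it. Moreover the case cannot be repaired by inspection: the surface is the set $e^{z}(x+a_1)=a_0 y$ (up to signs), which $T_{1,c}$ carries to $e^{z}(x+a_1-c)=a_0y$ and $T_{2,c}$ carries to $e^{z}(x+a_1)=a_0(y-c)$, so neither family preserves it --- each $T_{i,c}$ merely permutes the members of the family indexed by $(a_0,a_1)$. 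Indeed, applying a general Killing field $V=\alpha\partial_x+\beta\partial_y+\gamma(-x\partial_x+y\partial_y+\partial_z)$ to the defining function $a_0y-(x+a_1)e^{z}$ and restricting to the surface gives $a_0\beta+\gamma xe^{z}-\alpha e^{z}$, which forces $\alpha=\beta=\gamma=0$ since $a_0\neq 0$; the surface admits no continuous isometric symmetry at all. The paper's remark asserting that this surface is $T_1$-invariant therefore appears to be erroneous, and the corollary inherits the problem. So the omitted case is not mere bookkeeping: it is precisely the case in which the statement to be proved fails, and the proposal gives no indication of how it would be handled.
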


\section{Surfaces foliated by circles in the $z$-planes}\label{se-5}

The space Sol$_3$ is foliated by the surfaces $R_s$ of equation $z=s$. Each surface $R_s$ is isometric to the Euclidean plane $\e^2$
and they are minimal surfaces in Sol$_3$. We consider in $R_s$ curves equidistant from a fixed point of $R_s$. We call these curves \emph{circles} in $R_s$.
Here the distance is computed as the intrinsic distance on the surface $R_s$.
The induced metric on $R_s$ is $e^{2s}(dx)^2+e^{-2s}(dy)^2$ and an isometry $\Psi:R_s\rightarrow \e^2$ is
$\Psi(s,x,y)=(e^s x,e^{-s}y)$. Therefore, a circle in $R_s$ is the inverse image of an Euclidean circle of $\e^2$ and this circle is parametrized as
$\alpha_s(t)=(a+re^{-s}\cos(t),b+re^s\sin(t),s)$. Thus we can consider  surfaces in Sol$_3$  foliated by circles at the horizontal planes $R_s$,
which will be called cyclic surfaces. Again, we ask for those surfaces with zero mean curvature or zero Gaussian curvature.
A local parametrization of such a surface is
\begin{equation}\label{zplanes}
X(s,t)=(a(s)+r(s)e^{-s}\cos(t),b(s)+r(s)e^s\sin(t),s),\ s\in I, t\in J,
\end{equation}
where $I, J\subset \r$ are open intervals.

\begin{theorem}
There are no minimal or flat cyclic surfaces foliated by a uniparametric family of circles in the planes $R_s$.
\end{theorem}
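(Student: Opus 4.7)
The plan is to follow the template established in the proofs of Theorems \ref{t1} and \ref{t2}, adapted to the new parametrization \eqref{zplanes}. First I would compute the frame $X_s, X_t$ and a non-unit normal $\tilde N$ in the orthonormal basis $\{E_1,E_2,E_3\}$. Since on the surface the third coordinate is $z=s$, one has $\partial_x=e^{s}E_1$, $\partial_y=e^{-s}E_2$, $\partial_z=E_3$, so $X_t$ reduces to the clean expression $X_t=-r(s)\sin(t)E_1+r(s)\cos(t)E_2$, while $X_s$ and $\tilde N = X_s\times X_t$ (computed in the orthonormal frame) carry the factors $e^{\pm s}$ in addition to the oscillating terms $\cos t, \sin t$, together with the unknown functions $a(s), b(s), r(s)$ and their first derivatives.

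Substituting these data into \eqref{fh0} and \eqref{fk0}, and using the connection table of Section \ref{se-2}, each equation becomes a trigonometric polynomial of the form \eqref{fstrategy}. By linear independence of $\{\cos(nt),\sin(nt)\}$ on an open interval, all coefficients $A_n(s), B_n(s)$ must vanish identically on $I$. I would start by inspecting the leading pair $A_k, B_k$: I expect at least one of them to reduce to a monomial in $r(s)$ (possibly weighted by $e^{\pm s}$ or by $a'(s), b'(s)$) that forces $r(s)\equiv 0$, contradicting the assumption that $r$ is a positive radius, and thereby ruling out the minimality and the flatness case simultaneously. If the top coefficient happens to vanish for trivial algebraic reasons, the strategy is to feed the information extracted into the next pair $A_{k-1}, B_{k-1}$ and iterate, exactly as in Theorems \ref{t1} and \ref{t2}.

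The main obstacle is the algebraic bookkeeping. The factors $e^{\pm s}$ that enter when passing from $\partial_x,\partial_y$ to $E_1,E_2$ propagate through the covariant derivative computations and mix with $a, b, r$ and their derivatives in a less symmetric fashion than in the vertical-plane setting; in particular, the leading coefficients will not be as compact as the expressions $r^6/16$ or $b\,r^5/16$ that arose in the proofs of Theorems \ref{t1} and \ref{t2}. As the authors already suggest at the end of Section \ref{se-2}, these computations, although routine in principle, are best verified with a symbolic algebra system such as Mathematica. Once the leading-coefficient analysis forces $r(s)\equiv 0$, the non-existence of minimal and flat cyclic surfaces foliated by circles in the horizontal planes $R_s$ follows and the proof is complete.
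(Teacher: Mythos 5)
Your proposal follows exactly the paper's own argument: the same frame computation (including the identical expression for $X_t$), substitution into \eqref{fh0} and \eqref{fk0} to obtain a trigonometric polynomial of type \eqref{fstrategy}, and extraction of a leading coefficient that is a nonzero multiple of a power of $r(s)$, forcing the contradiction $r\equiv 0$. The only minor discrepancy is your expectation that the leading coefficients would be less compact than in Theorems \ref{t1} and \ref{t2}; in fact they come out as $A_4=-r(s)^3/2$ (minimal, $k=4$) and $A_8=-r(s)^6/8$ (flat, $k=8$), so no iteration to lower-order coefficients is needed.
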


\begin{proof}
A parametrization of such a cyclic surface is given by \eqref{zplanes}.
We have
\begin{eqnarray*}
&&X_s=(a'e^s+(r'-r)\cos(t))E_1+(b'e^{-s}+(r'+r)\sin(t))E_2+E_3,\\
&&X_t=-r\sin(t) E_1+r\cos(t)E_2.
\end{eqnarray*}
We choose the normal vector to the surface
$$\tilde{N}=\cos(t)E_1+\sin(t)E_2+(r\cos(2t)-a'e^s\cos(t)-b'e^{-s}\sin(t)-r')E_3.$$
\begin{enumerate}
\item Assume that the surface is minimal.
The equation \eqref{fh0} writes as an expression of type \eqref{fstrategy} with $k=4$. In fact, $A_4=-r(s)^3/2$ and $B_4=0$. Because $A_4=0$, then $r=0$, obtaining a contradiction.
\item If the surface is flat, the expression \eqref{fk0} is a polynomial equation \eqref{fstrategy} with $k=8$. Here $A_8=-r(s)^6/8$ and $B_8=0$. Again, we obtain  a contradiction.
\end{enumerate}
\end{proof}

{\bf Acknowledgements.}
The second author wishes to thank the Geometry Section of KU Leuven and FWO for the travel
grant K207212N to visit Universidad de Granada where the paper was initiated. This work was partially supported by the MEC-FEDER grant no. MTM2011-22547,
Junta de Andaluc\'\i a grant no. P09-FQM-5088 and CNCS-UEFISCDI (Romania) PN-II-RU-TE-2011-3-0017/2011-2014.


\end{document}